\def\timestamp{%
Time-stamp: <zero-dim-F.tex: Thursday 09-12-2021 at 13:49:21 (cet)>}
\def\stripname Time-stamp: <#1 #2>{#2}
\edef\filedate{\expandafter\stripname\timestamp}
\newcommand\omegaseq[1]{\langle {#1}_n:n\in\omega\rangle}
\newcommand\Omegaseq[2][1]{\langle {#2}_\alpha:\alpha\in\omega_{#1}\rangle}
\newcommand\omegatwodelta{(\omega_2)_\delta}
\newcommand\omegatwoodelta{(\omega_2+1)_\delta}
\newcommand\orpr[2]{\langle{#1},{#2}\rangle}
\newcommand\bigorpr[2]{\bigl<{#1},{#2}\bigr>}
\newcommand\preim{^\gets}
\newcommand\cl{\operatorname{cl}}
\newcommand\0{\mathbf{0}}
\newcommand\calB{\mathcal{B}}
\newcommand\calI{\mathcal{I}}
\newcommand\calJ{\mathcal{J}}
\newcommand\calU{\mathcal{U}}
\newcommand\calV{\mathcal{V}}
\DeclareMathSymbol\A0{AMSb}{`A}
\DeclareMathSymbol\B0{AMSb}{`B}
\DeclareMathSymbol\HH0{AMSb}{`H}
\DeclareMathSymbol\K0{AMSb}{`K}
\DeclareMathSymbol\LL0{AMSb}{`L}
\DeclareMathSymbol\N0{AMSb}{`N}
\DeclareMathSymbol\Q0{AMSb}{`Q}
\DeclareMathSymbol\R0{AMSb}{`R}
\newcommand\betaN{\beta\N}
\newcommand\Nstar{\N^*}
\DeclareMathSymbol\restr  \mathbin{AMSa}{"16}
\DeclareMathSymbol\le     \mathrel{AMSa}{"36}
\DeclareMathSymbol\ge     \mathrel{AMSa}{"3E}
\newcommand\cee{\mathfrak{c}}
\DeclareSymbolFont{cmmib}{OML}{cmm}{b}{it}
\DeclareSymbolFont{cmbsy}{OMS}{cmsy}{b}{n}
\DeclareMathSymbol\vq0{cmmib}{`q}
\DeclareMathSymbol\namef 0{cmmib}{`f}
\DeclareMathSymbol\namex 0{cmmib}{`x}
\DeclareMathSymbol\nameI 0{cmbsy}{`I}
\newcommand\axiom{\mathsf}
\newcommand\CH{\axiom{CH}}
\newcommand\ZFC{\axiom{ZFC}}
\theoremstyle{plain}
\newtheorem{theorem}{Theorem}[section]
\newtheorem{lemma}[theorem]{Lemma}
\theoremstyle{definition}
\newtheorem{example}{Example}
\theoremstyle{remark}
\newtheorem{question}{Question}
\newcommand\zbl[1]{ \allowbreak zbMATH~#1}
\begin{document}
\title[An $F$-space]
      {A zero-dimensional $F$-space\\ that is not strongly zero-dimensional}

\author[A. Dow]{Alan Dow}
\address{Department of Mathematics\\
         UNC-Charlotte\\
         9201 University City Blvd. \\
         Charlotte, NC 28223-0001}
\email{adow@uncc.edu}
\urladdr{https://webpages.uncc.edu/adow}

\author[K. P. Hart]{Klaas Pieter Hart}

\address{Faculty EEMCS\\TU Delft\\
         Postbus 5031\\2600~GA {} Delft\\the Netherlands}
\email{k.p.hart@tudelft.nl}
\urladdr{http://fa.ewi.tudelft.nl/\~{}hart}

\date{\filedate}

\subjclass{Primary 54G05; 
           Secondary 54F45, 54G20}

\keywords{F-space, zero-dimensional, strongly zero-dimensional}

\begin{abstract}
We present an example of a zero-dimensional $F$-space that
is not strongly zero-dimensional.
\end{abstract}

\maketitle

\section*{Introduction}

In this paper we present an example of a zero-dimensional $F$-space that
is not strongly zero-dimensional.
We recall that a space is \emph{zero-dimensional} if it is a $T_1$-space and 
its clopen subsets form a base for the topology.
The fastest way to define a space to be \emph{strongly zero-dimensional} 
is by demanding that its \v{C}ech-Stone compactification is zero-dimensional.

The question whether zero-dimensionality implies strong zero-dimensionality
has a long history, a summary of which can be found 
in~\cite{MR1039321}*{Section~6.2}.
There are by now many examples of zero-dimensional spaces that are 
not strongly zero-dimensional, even metrizable ones, see~\cite{MR227960},
but the authors are not aware of an $F$-space of this nature.

The question whether there is a $F$-space example was making the rounds
already in the 1980s but it seems to have been asked explicitly only
a few years ago on MathOverFlow, see~\cite{flow239324}.
Recently Ali Reza Olfati raised the question with the first author
in a different context.

In section~\ref{sec.prelim} we give proper definitions of the notions mentioned
above and indicate why it may seem reasonable, but also illusory, 
to expect that zero-dimensional $F$-spaces are strongly zero-dimensional.

In section~\ref{sec.construct} we construct the example and in 
section~\ref{sec.variations} we discuss some variations;
the example can have arbitrary large covering dimension and its
\v{C}ech-Stone remainder can be an indecomposable continuum.

\section{Preliminaries}\label{sec.prelim}

In the introduction we defined zero-dimensional spaces as $T_1$-spaces
in which the clopen sets constitute a base for the open sets and
strong zero-dimensionality by requiring that the 
\v{C}ech-Stone compactification is zero-dimensional.

The latter is a characterization of strong zero-dimensionality.
The real definition is akin to the large inductive dimension:
a Tychonoff space $X$~is strongly zero-dimensional if any two 
completely separated sets are separated by a clopen set, that is,
if $A$ and $B$ are such that there is a continuous function $f:X\to[-1,1]$
with $f[A]=\{-1\}$ and $f[B]=\{1\}$ then there is a clopen set~$C$
such that $A\subseteq C$ and $C\cap B=\emptyset$.
One could reformulate the latter conclusion as: there is a continuous
function $c:X\to\{-1,1\}$ such that $c[A]=\{-1\}$ and $c[B]=\{1\}$.
It is not hard to show that this is equivalent to $\beta X$ being 
zero-dimensional. 

Furthermore, for normal spaces strong zero-dimensionality is characterized
by the `normal' sounding 
``disjoint closed sets are contained in complementary clopen sets''.

\bigskip
There are many characterizations of $F$-spaces, 
see~\cite{MR0407579}*{Theorem~14.25},
each of which deserves to be taken as the definition but we take the one that 
at first glance seems quite close to strong zero-dimensionality; it is
number~(5) in the theorem referred to above.
A Tychonoff space~$X$ is an $F$-space iff for every continuous function
$f:X\to\R$ there is another continuous function $k:X\to\R$ with the property 
that $f=k\cdot|f|$; so $k$~is constant on the 
sets $\{x:f(x)>0\}$ and $\{x:f(x)<0\}$ with values~$1$ and~$-1$ respectively.
Although $k$~does seem to act like the function~$c$ in our definition
of strong zero-dimensionality, it does not.

In fact there are (compact) connected $F$-spaces, 
for example $\beta\R^+\setminus\R^+$, where $\R^+=\{x\in\R:x\ge0\}$,
see~\cite{MR0407579}*{14.27} or~\cite{MR1229130}.
In such spaces the function~$k$ takes on all values in the interval~$[-1,1]$
on the set~$\{x:f(x)=0\}$, which apparently need not be as thin as we have 
come to expect from Calculus;
in an $F$-space sets like $\{x:f(x)>0\}$ and $\{x:f(x)<0\}$ are actually
very far apart.

\bigskip
Our notation is standard, see \cite{MR1039321} and \cite{MR0407579}
for topological notions, and~\cite{MR597342} for Set Theory.

\section{A zero-dimensional $F$-space that is not strongly zero-dimensional}
\label{sec.construct}

The construction in this section is inspired by an answer
to a question on MathOverFlow, see~\cite{flow93719}, 
which in turn was inspired by Dowker's example~M in~\cite{MR86286}. 
The latter is a subspace of~$\omega_1\times[0,1]$; the example
on MathOverFlow is a quotient of~$\omega_1\times\A$, where $\A$~is
Alexandroff's split interval.

\medskip
We replace the ordinal space~$\omega_1$ by the $G_\delta$-modification
of the ordinal space~$\omega_2$, which we denote~$\omegatwodelta$;
likewise $\omegatwoodelta$ denotes the $G_\delta$-modification of~$\omega_2+1$.
We replace~$\A$ by the split interval over a suitable ordered continuum.

We shall use an ordered continuum~$K$ with a dense subset~$D$ that can be
enumerated as $\Omegaseq[2]{d}$ in such a way that every tail set
$T_\alpha=\{d_\beta:\beta\ge\alpha\}$ is dense in~$K$.

\begin{example}
If $\CH$ fails then we can take $K=[0,1]$ and, like Dowker did,
choose $\aleph_2$~many distinct cosets of~$\Q$, 
say $\{\Q_\alpha:\alpha\in\omega_2\}$, and enumerate their union~$D$ 
as~$\Omegaseq[2]{d}$ in such a way that 
$\langle d_{\omega\cdot\alpha+n}:n\in\omega\rangle$ 
enumerates~$\Q_\alpha\cap(0,1)$.
\end{example}

\begin{example}
For a $\ZFC$ example let $M$ be the linearly ordered sum
$\omega_2^\star+\{\0\}+\omega_2$,
where $\omega_2^\star$ denotes~$\omega_2$ with its order reversed.
Following~\cite{zbMATH02646353} we let $L=M_{\0}(\omega)$,
that is, the set $\{x\in M^\omega:\{m:x_n\neq\0\}$~is finite$\}$,
ordered lexicographically.
It is elementary to verify that the linear order is dense, in fact every 
interval has cardinality~$\aleph_2$, and has no smallest or largest element.

We let $K$ be the Dedekind completion of~$L$
(see~\cite{zbMATH02615204}*{Kap.~IV, \S\,5}), that is, 
the set of initial segments that have no maximum 
(including $\emptyset$ and~$L$ as minimum and maximum respectively), 
ordered by inclusion.
Then $K$~is an ordered continuum and the set~$L$ itself serves as the desired 
dense set, under any enumeration.
\end{example}

We need the following Lemma, which is a variation of a result of 
Van~Douwen,
see~\cite{MR1039321}*{Problem 3.12.20.(c)}.

\begin{lemma}\label{lemma.eric}
Let $X$ be a compact Hausdorff space.
The product $\omegatwodelta\times X$ is $C$-embedded in 
$\omegatwoodelta\times X$.
\end{lemma}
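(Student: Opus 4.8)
The plan is to prove something stronger than $C$-embedding and then deduce the embedding formally. Concretely, I would show that every continuous $f\colon\omegatwodelta\times X\to\R$ is \emph{eventually independent of the first coordinate}: there are an ordinal $\gamma^*<\omega_2$ and a continuous $\bar f\colon X\to\R$ with $f(\alpha,x)=\bar f(x)$ whenever $\gamma^*<\alpha<\omega_2$ and $x\in X$. Granting this, define $\tilde f=f$ on $\omegatwodelta\times X$ and $\tilde f(\omega_2,x)=\bar f(x)$. Since $\{\omega_2\}$ is closed in $\omegatwoodelta$ (its complement is a union of order-open initial segments), the set $\omegatwodelta\times X$ is open; and $(\gamma^*,\omega_2]\times X$ is a second open set on which $\tilde f$ equals $\bar f\circ\pi_X$, the new column included. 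These two open sets cover $\omegatwoodelta\times X$ and the two formulas for $\tilde f$ agree on the overlap, so $\tilde f$ is continuous and $C$-embedding follows.

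The topological input I would isolate first is the local structure of $\omegatwodelta$: points of cofinality~$\omega$ become isolated, whereas a point $\alpha$ with $\cf(\alpha)=\omega_1$ retains the order-intervals $(\gamma,\alpha]$, for $\gamma<\alpha$, as a neighbourhood base, because any countable set of ordinals below $\alpha$ is bounded below $\alpha$. Hence continuity only genuinely constrains $f$ at the points of the stationary set $S=\{\alpha<\omega_2:\cf(\alpha)=\omega_1\}$, and there it constrains $f$ on an entire left-interval below $\alpha$, isolated points and all. That asymmetric, order-theoretic leverage is precisely what makes eventual constancy possible (and explains why $\omega_2$, not $\omega_1$, is needed: modifying $\omega_1$ would isolate every limit point).

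The heart of the proof, and the step I expect to be the main obstacle, is producing for each $\alpha\in S$ a \emph{single} threshold $\gamma(\alpha)<\alpha$ that works uniformly over the compact factor~$X$. Fixing $\alpha\in S$ and $n\in\omega$, I would use continuity at each $(\alpha,x)$ to choose $\gamma^n(\alpha,x)<\alpha$ and a neighbourhood $U^n(\alpha,x)\ni x$ on which $f$ oscillates by less than $1/n$; compactness of $X$ gives a finite subcover centred at $x_1,\dots,x_k$ and a common $\gamma^n(\alpha)=\max_j\gamma^n(\alpha,x_j)<\alpha$. Evaluating the resulting estimate also at $\beta=\alpha$ and running the triangle inequality through the centre $x_j$ removes the dependence on the centre, yielding $|f(\beta,y)-f(\alpha,y)|<2/n$ for all $\beta\in(\gamma^n(\alpha),\alpha]$ and all $y\in X$. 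Because $\cf(\alpha)=\omega_1$, the ordinal $\gamma(\alpha)=\sup_n\gamma^n(\alpha)$ is still below $\alpha$, and on $(\gamma(\alpha),\alpha]$ the estimate holds for every $n$, so in fact $f(\beta,\cdot)=f(\alpha,\cdot)$ as functions on $X$.

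Finally I would apply Fodor's pressing-down lemma to the regressive map $\alpha\mapsto\gamma(\alpha)$ on the stationary set $S$, obtaining a stationary $S^*\subseteq S$ and a fixed $\gamma^*$ with $\gamma(\alpha)=\gamma^*$ for all $\alpha\in S^*$. For $\alpha<\alpha'$ in $S^*$ the nested intervals $(\gamma^*,\alpha]\subseteq(\gamma^*,\alpha']$ overlap, forcing $f(\alpha,\cdot)=f(\alpha',\cdot)$; this common slice is the continuous $\bar f$ sought (a restriction of $f$). As $S^*$ is cofinal in $\omega_2$, every $\beta\in(\gamma^*,\omega_2)$ lies in some $(\gamma^*,\alpha]$ with $\alpha\in S^*$, so $f(\beta,\cdot)=\bar f$ there too, even at the isolated points. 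This establishes the eventual independence and hence the Lemma.
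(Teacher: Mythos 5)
Your proposal is correct and follows essentially the same route as the paper's proof: fix $\alpha$ of cofinality $\aleph_1$, use continuity at each $(\alpha,x)$ plus compactness of $X$ and a triangle inequality to get a single threshold $\gamma(\alpha)<\alpha$ below which the slices agree with $f(\alpha,\cdot)$, then press down on the regressive map $\alpha\mapsto\gamma(\alpha)$ to obtain a uniform $\gamma^*$. Your write-up merely spells out the Fodor step and the continuity of the extension at the new column in more detail than the paper does.
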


\begin{proof}
Let $f:\omegatwodelta\times X\to\R$ be continuous.

Take $\alpha\in\omega_2$ of cofinality~$\aleph_1$.
For every $x\in X$ and $n\in\omega$ one can find $\beta(x,n)<\alpha$ and 
an open set~$U(x,n)$ in~$X$ such that $x\in U(x,n)$ and
$$
f\bigl[(\beta(x,n),\alpha]\times U(x,n)\bigr]\subseteq
 \bigl(f(\alpha,x)-2^{-n},f(\alpha,x)+2^{-n}\bigr)
$$
By compactness we can take a finite subcover $\{U(x,n):x\in F_n\}$
of the cover $\{U(x,n):x\in X\}$.
Let $\beta_n=\max\{\beta(x,n):x\in F_n\}$, then for all $x\in X$ 
and $\gamma\in(\beta_n,\alpha]$ we have 
$\bigl|f(\gamma,x)-f(\alpha,x)\bigr|<2^{-n+1}$.

Next let $\beta_\alpha=\sup\{\beta_n:n\in\omega\}$, then $\beta_\alpha<\alpha$
and $f$~is constant on each horizontal line $(\beta_\alpha,\alpha]\times\{x\}$.

\smallskip
The Pressing-Down Lemma now gives us a single~$\beta$ such that $f$~is
constant on~$(\beta,\omega_2)\times\{x\}$ for all~$x$.
Those constant values give us our continuous extension of~$f$
to $\omegatwoodelta\times X$. 
\end{proof}

The rest of the section is devoted to the construction of our $F$-space.

\subsection*{Split intervals}
Using the continuum~$K$ and the dense set $\{d_\alpha:\alpha\in\omega_2\}$
we create a sequence $\langle K_\alpha:\alpha\le\omega_2\rangle$ of ordered 
compacta, as follows:
$$
K_\alpha=\{\orpr xi\in K\times 2: \text{if }
x\notin T_\alpha \text{ then }i=0\}
$$
ordered lexicographically (reading from left to right).
Thus $K_\alpha$~is a split interval over~$K$, where all points~$d_\beta$
with $\beta\ge\alpha$ are split in two; if $\alpha=\omega_2$ then no points
are split and $K_{\omega_2}$ is just $K$~itself.

There are obvious maps $q_{\alpha,\beta}:K_\alpha\to K_\beta$ when $\alpha<\beta$,
defined by
\begin{align*}
q_{\alpha,\beta}(x,i)&=\orpr x0 \text{ when }x\notin\{d_\gamma:\gamma\ge\beta\}\\
q_{\alpha,\beta}(d_\gamma,i)&=\orpr{d_\gamma}i\text{ when }\gamma\ge\beta.
\end{align*}
We abbreviate the maps $q_{0,\alpha}$ by $q_\alpha$.

If $\alpha<\omega_2$ then $K_\alpha$ is zero-dimensional.
Here is where we use that every tail set~$T_\alpha$ 
is dense in~$K$.
This implies that the family $\calB_\alpha$ of all clopen intervals of the form
$\bigl[\min K,  \orpr e0\bigr]$,
$\bigl[\orpr d1,\orpr e0\bigr]$, and
$\bigl[\orpr d1,\max K\bigr]$, 
where $d,e\in T_\alpha$, is base for the topology of~$K_\alpha$.
As $K_\alpha$ is compact it is strongly zero-dimensional as well.
 
For later use: the intervals in $\calB_\alpha$ belong to $\calB_\beta$
when $\beta\le\alpha$ (when suitably interpreted) and
if $I\in\calB_\alpha$ is such an interval then it 
satisfies $I=q_{\beta,\alpha}\preim\bigl[q_{\beta,\alpha}[I]\bigr]$ 
whenever $\beta\le\alpha$.

\subsection*{Using compactifications}

To get to our $F$-space we take, for every~$\alpha\le\omega_2$,
the \v{C}ech-Stone compactification $\beta(\omega\times K_\alpha)$
of the product~$\omega\times K_\alpha$; we let $\K_\alpha$ denote
this compactification and $X_\alpha$ denotes
the remainder $(\omega\times K_\alpha)^*$.
The maps~$q_{\alpha,\beta}$ induce maps from~$\K_\alpha$ to~$\K_\beta$ when
$\alpha<\beta$; we denote these by~$\vq_{\alpha,\beta}$, 
and~$\vq_\alpha=\vq_{0,\alpha}$ of course.

If $\alpha<\omega_2$ then the product $\omega\times K_\alpha$ is 
strongly zero-dimensional because $K_\alpha$~is; 
this implies that $\K_\alpha$ and~$X_\alpha$ are zero-dimensional and
hence, by compactness, strongly zero-dimensional as well.
Furthermore, by~\cite{MR0407579}*{14.27}, every~$X_\alpha$~is an $F$-space,
including for $\alpha=\omega_2$.

We consider the product $\omegatwodelta\times\K_0$ and the union
$$
\K=\bigcup\bigl\{\{\alpha\}\times\K_\alpha:\alpha<\omega_2\}
$$
as well as $\omegatwoodelta\times\K_0$ 
and $\K^+=\K\cup(\{\omega_2\}\times \K_{\omega_2})$.

Our example will be the union of the remainders:
$$
X=\bigcup\bigl\{\{\alpha\}\times X_\alpha:\alpha<\omega_2\}
$$
and we also use $X^+=X\cup(\{\omega_2\}\times X_{\omega_2})$.

\subsection*{A quotient map and the topology}

We define $\vq:\omegatwoodelta\times\K_0\to\K^+$ by combining the
maps $\vq_\alpha$:
$$
\vq(\alpha,x) = \bigorpr\alpha{\vq_\alpha(x)}
$$
We give $\K^+$ the quotient topology determined by~$\vq$ and the product 
topology on~$\omegatwoodelta\times\K_0$.
We show that $\vq$~is a perfect map.
The fibers of~$\vq$ are clearly compact so we must show that $\vq$~is closed.

To begin note that for each~$\alpha$ the set~$\{\alpha\}\times\K_\alpha$
is closed and the map $\vq_\alpha:\K_0\to\K_\alpha$ is a closed map,
so that the quotient topology on~$\{\alpha\}\times\K_\alpha$ is its
normal topology.
Also, if $\alpha$~has countable cofinality then 
$\{\alpha\}\times\K_0$~is clopen in the product, hence 
so is~$\{\alpha\}\times\K_\alpha$ in~$\K^+$.

Next let $\alpha$ be of uncountable cofinality, take $x\in\K_\alpha$ and 
an open set~$O$ in $\omegatwoodelta\times\K_0$ such that 
$\vq\preim(\alpha,x)=\{\alpha\}\times\vq_\alpha\preim(x)\subseteq O$. 
By compactness there are an open set~$V$ in~$\K_0$ and $\beta<\alpha$
such that 
$$
\{\alpha\}\times\vq_\alpha\preim(x)\subseteq(\beta,\alpha]\times V\subseteq O
$$
Because $\vq_\alpha:\K_0\to\K_\alpha$ is closed there is an open set~$U$ 
in~$\K_\alpha$ such that $\vq_\alpha\preim[U]\subseteq V$.
Then $(\beta,\alpha]\times\vq_\alpha\preim[U]$ is an open subset 
of~$\omegatwoodelta\times\K_0$.
For $\gamma\in(\beta,\alpha)$ we have 
$\vq_\alpha=\vq_{\gamma,\alpha}\circ\vq_\gamma$,
hence $\vq_\alpha\preim[U]=\vq_\gamma\preim\bigl[\vq_{\gamma,\alpha}\preim[U]\bigr]$.

It follows that $\vq\preim[W]=(\beta,\alpha]\times\vq_\alpha\preim[U]$, where
$$
W=\bigcup\bigl\{\{\gamma\}\times\vq_{\gamma,\alpha}\preim[U]:
                \beta<\gamma\le\alpha\bigr\}
$$
The set $W$ is therefore open and $\vq\preim[W]\subseteq O$.

This argument also shows that $\K$~is zero-dimensional because 
if $\alpha<\omega_2$ the set~$U$ can be taken to be a clopen set and the 
resulting set~$W$ is clopen as well.

\smallskip
Thus far we have topologized $\K^+$ and hence $X^+$ and we have shown that
$X$~is zero-dimensional.
We now turn to showing that $X^+$~is an $F$-space and that $X$~is $C$-embedded
in~$X^+$.
This will show that $\beta X=\beta X^+$, 
hence $X$~is an $F$-space as well (by~\cite{MR0407579}*{14.25})
but not strongly zero-dimensional because the one-dimensional
space~$X_{\omega_2}$ is a subspace of~$\beta X$; we establish the 
one-dimensionality of~$X_{\omega_2}$ in the next section.

\subsection*{$C$-embedding}

To show that $X$ is $C$-embedded in~$X^+$ we let $f:X\to\R$ be continuous
and apply the proof of Lemma~\ref{lemma.eric} to 
$f\circ\vq:\omegatwodelta\times X_0\to\R$ to find an $\alpha<\omega_2$
such that $f\circ\vq$~is constant on $(\alpha,\omega_2)\times\{x\}$ 
for all $x\in X$, which then determines the (unique) extension
$g:\omegatwoodelta\times X_0\to\R$ of~$f\circ\vq$.

We show that $g(\omega_2,x)=g(\omega_2,y)$ 
whenever $\vq_{\omega_2}(x)=\vq_{\omega_2}(y)$; for then
$g$~determines a continuous extension of~$f$ to~$X^+$.
We assume $x\neq y$ of course and take disjoint neighbourhoods~$U$ and~$V$
of~$x$ and~$y$ in~$\K_0$.

Using the compactness of~$K_0$ we find two sequences~$\omegaseq\calI$ 
and~$\omegaseq\calJ$ of finite subfamilies of~$\calB_0$
such that the clopen sets 
$I=\bigcup\bigl\{\{n\}\times\bigcup\calI_n:n\in\omega\bigr\}$ and
$J=\bigcup\bigl\{\{n\}\times\bigcup\calJ_n:n\in\omega\bigr\}$
satisfy
\begin{itemize}
\item $I\in x$ and $J\in y$ ($x$ and $y$ are ultrafilters of closed sets), and
\item $I\subseteq U$ and $J\subseteq V$.
\end{itemize}
For each~$n$ let $E_n$ be the set of points in~$K$ that occur as first 
coordinates of endpoints of one of the intervals in~$\calI_n$ and~$\calJ_n$. 
The union, $E$, of these sets is countable.
Therefore there is a~$\beta\ge\alpha$ such that 
$E\cap T_\beta=\emptyset$.
This means that for $\gamma\ge\beta$ the restriction~$q_{\gamma,\omega_2}\restr E$
is injective. 

Because $\vq_{\omega_2}(x)=\vq_{\omega_2}(y)$ the intersection of~$q_{\omega_2}[I]$
and~$q_{\omega_2}[J]$ is not compact.
For every~$n$ and intervals $A\in\calI_n$ and $B\in\calJ_n$ the intersection
of $\vq[A]$ and $\vq[B]$ is contained in~$E_n$.
Therefore $\vq[I]\cap\vq[J]$ is contained in
$F=\bigcup\bigl\{\{n\}\times E_n:n\in\omega\bigr\}$
and hence the common value of $\vq(x)$ and~$\vq(y)$ belongs to $\cl F$.
As the maps $q_{\gamma,\omega_2}$ are injective on~$E$ for $\gamma\ge\beta$, so
are the maps $\vq_{\gamma,\omega_2}$ 
on~$X_\gamma\cap\cl F$ whenever $\gamma\ge\beta$.  

It follows that for all $\gamma\ge\beta$ we have $\vq_\gamma(x)=\vq_\gamma(y)$ 
and therefore 
$$
g(\gamma,x)=f(\gamma,\vq_\gamma(x))=f(\gamma,\vq_\gamma(y))=g(\gamma,y)
$$
and this implies $g(\omega_2,x)=g(\omega_2,y)$, as desired.

\subsection*{$F$-space}
To see that $X^+$~is an $F$-space let $f:X^+\to\R$ be continuous.
We seek a continuous function $k:X^+\to\R$ such that $f=k\cdot|f|$.

As in the proof above we take $\alpha<\omega_2$ such that
$f\circ \vq$~is constant on all horizontal 
lines $(\alpha,\omega_2]\times\{x\}$.

Since $X_{\omega_2}$~is an $F$-space we get a continuous function
$g:X_{\omega_2}\to\R$ such 
that $f(\omega_2,x)=g(x)\cdot\bigl|f(\omega_2,x)\bigr|$.

For all $\beta>\alpha$ we define $k_{\omega_2}$ on $\{\beta\}\times X_\beta$ by
$k_{\omega_2}(\gamma,x)=g(\vq_{\gamma,\omega_2}(x))$,
and $k^+$ on $\{\gamma\}\times X_0$ by $k^+(\gamma,x)=g(\vq_{\omega_2}(x))$.
Then $k^+$~is continuous 
and $k^+=k_{\omega_2}\circ\vq$ on $(\alpha,\omega_2]\times X_0$, 
so that $k_{\omega_2}$~is continuous as well.
Rename $\alpha$ as~$\beta_{\omega_2}$.

Now repeat this argument for every~$\alpha$ 
of cofinality~$\aleph_1$.
First find $\beta_\alpha<\alpha$, as in the proof of Lemma~\ref{lemma.eric},
such that $f\circ\vq$ is constant on $(\beta_\alpha,\alpha]\times\{x\}$
for all~$x$, find a $g$ on~$X_\alpha$ and define~$k_\alpha$ on 
$\{\gamma\}\times X_\gamma$, for $\gamma\in(\beta_\alpha,\alpha]$ as above
by $k_\alpha(\gamma,x)=g(\vq_{\gamma,\alpha}(x))$.

Finally, for every~$\alpha$ of countable cofinality take 
$k_\alpha:\{\alpha\}\times X_\alpha\to\R$ 
such that $f(\alpha,x)=k_\alpha(\alpha,x)\cdot\bigl|f(\alpha,x)\bigr|$
for all~$x$.

Since $\omegatwoodelta$ is Lindel\"of there is a countable subset~$C$ 
of~$\omega_2$ consisting of ordinals of cofinality~$\aleph_1$ such that
the interval $(\beta_{\omega_2},\omega_2]$ together with 
$\bigl\{(\beta_\alpha,\alpha]:\alpha\in C\bigr\}$
covers all but countably many points of~$\omega_2+1$.
From this it is easy to construct a pairwise disjoint clopen cover
of~$\omegatwoodelta$ and combine the various $k_\alpha$ into one continuous
function.

\section{Some variations and questions}
\label{sec.variations}

The construction of our main example admits various variations.

\subsection*{Arbitrarily large covering dimension}

To get a zero-dimensional $F$-space of a prescribed covering dimension~$n$
everywhere in the main construction replace $K_\alpha$ by~$K_\alpha^n$.
Then $\K_{\omega_2}=\beta(\omega\times K^n)$.
By the main result of~\cite{MR0221480} we have $\dim K^n=n$.
The proof of this establishes that the pairs of opposite faces of this
`$n$-cube' form an essential family.
To elaborate: write $\min K=0$ and $\max K=1$ and
for $i\in n$ put $A_i=\{x\in K^n:x_i=0\}$ and $B_i=\{x\in K^n:x_i=1\}$.
Then for every sequence $\langle L_i:i\in n\rangle$ of partitions of~$K^n$, 
with $L_i$ between $A_i$ and $B_i$, the intersection~$\bigcap_{i\in n}L_i$
is nonempty. 
By the Theorem on Partitions, \cite{MR1039321}*{Theorem~7.2.15}, this 
establishes $\dim K^n\ge n$.
In addition \cite{MR0221480}~establishes the 
inequality $\operatorname{Ind}K^n\le n$.
We conclude that $\dim K^n=\operatorname{ind}K^n=\operatorname{Ind}K^n=n$.

To see that $\dim X_{\omega_2}=n$ as well, we consider the projection map
$\pi:\omega\times K^n\to\omega$ and its extension~$\beta\pi$.
In \cite{MR1229130}*{Section~2} it is shown that the components
of~$\K_{\omega_2}$ are exactly the fibers $\beta\pi\preim(u)$ for~$u\in\betaN$.

Next we let $\A_i=\cl(\omega\times A_i)$ and $\B_i=\cl(\omega\times B_i)$
for $i\in n$.
An elementary topological argument will show that for every $u\in\Nstar$
the intersections of the $\A_i$ and $\B_i$ with~$\beta\pi\preim(u)$
form an essential family.
This, together with the equality $\dim\beta Z=\dim Z$ 
(\cite{MR1039321}*{Theorem~7.1.17}), shows that every component 
of~$X_{\omega_2}$ has covering dimension~$n$.

To see that in this case also $\dim X=n$ we first observe
that $\dim X=\dim\beta X\ge\dim X_{\omega_2}=n$.
To get the opposite inequality we let $\calU$ be is a finite open cover 
of~$X^+$.
Its restriction to~$X_{\omega_2}$ has a finite closed refinement of order~$n+1$, 
which can be expanded to a finite family~$\calV$ of open sets that also
has order~$n+1$, covers~$X_{\omega_2}$, and refines~$\calU$.
The argument given below that $\beta X\setminus X_{\omega_2}$ is zero-dimensional
produces an~$\alpha$ such that 
$\bigcup_{\beta>\alpha}\{\beta\}\times X_\beta\subseteq\bigcup\calV$.
The rest of the space, $\bigcup_{\beta\le\alpha}\{\beta\}\times X_\beta$,
is strongly zero-dimension so the restriction of~$\calU$ to this clopen
set has a disjoint open refinement.

This proof will also work if one takes $K_\alpha^\omega$ everywhere, in which
case every component of~$X_{\omega_2}$ will be infinite-dimensional.

\smallskip
Most of the other arguments in Section~\ref{sec.construct} do not rely
on the particular structure of the $K_\alpha$, except the 
proof of $C$-embedding.

One still obtains finite sets $E_m$ of points in~$K$ that occur as end points
of intervals used to create clopen sets in~$\{m\}\times K_0^n$ and hence
in~$\omega\times K_0^n$.
One also still obtains a $\beta\ge\alpha$ with $E\cap T_\beta=\emptyset$.

The set $F$ is replaced by $\bigcup\bigl\{\{m\}\times G_m:m\in\omega\bigr\}$,
where $G_m$~is the grid on $K^n$ defined 
by $\{x:(\exists i\in n)(x_i\in E_m)\}$. 
Then $\vq_{\gamma,\omega_2}$ is injective on $\{\gamma\}\times\cl F$ for all
$\gamma\ge\beta$.

In case $n=\omega$ there is for every $m$ a natural number $k_m$ such that
the supports of the clopen rectangles used in $\{m\}\times K_0^\omega$ are
contained in~$k_m$.
In that case one takes $G_m=\{x:(\exists i\in k_m)(x_i\in E_m)\}$.

\subsection*{Indecomposability}

It is possible to make $X_{\omega_2}$ an indecomposable continuum.

To this end we take a preliminary quotient of every $\omega\times K_\alpha$
by identifying $\orpr n1$ and $\orpr {n+1}0$ for every~$n$
(we still use $0=\min K$ and $1=\max K$).
The result is an infinite string of copies of~$K_\alpha$
and in case $\alpha=\omega_2$ the result is a connected ordered space~$\LL$,
with a minimum, but no maximum.
From a distance it looks like the half~line $\HH=[0,\infty)$ in~$\R$,
with every interval $[n,n+1]$ replaced by~$K$.

The proof that $\HH^*$~is an indecomposable continuum,
see~\cite{MR1229130}*{Section~4}, goes through without any changes
to show that $X_{\omega_2}=\LL^*$ is indecomposable as well.

The proofs that $X$~is zero-dimensional and $C$-embedded in the 
$F$-space~$X^+$ are not affected by these identifications.

We note that if we assume $\neg\CH$ and use $K=[0,1]$ then $X_{\omega_2}$
is actually equal to the remainder~$\HH^*$ of the half line.

\subsection*{Local compactness}

In all variations the space $X_{\omega_2}$ is the sole cause of the failure
of strong zero-dimensionality.

To see this note first that the sets 
$F_\alpha=\bigcup_{\beta>\alpha}\{\beta\}\times X_\beta$ form a neighbourhood
base at~$\{\omega_2\}\times X_{\omega_2}$.
Indeed if $f:X^+\to\R$ is continuous and equal to zero 
on~$\{\omega_2\}\times X_{\omega_2}$ then there is an $\alpha<\omega_2$ such that
$f$~is constant and equal to zero 
on~$F_\alpha$ and the latter is a clopen 
subset of~$X^+$.

Furthermore the complement of such a set, 
$I_\alpha=\bigcup_{\beta\le\alpha}\{\beta\}\times X_\beta$,
is strongly zero-dimensional.
The fastest way to see this is to note that the product
$(\alpha+1)_\delta\times K_0$ is Lindel\"of as a product of a compact
and a Lindel\"of space. 
Therefore $I_\alpha$~is Lindel\"of as well.
By \cite{MR1039321}*{Theorem~6.2.7} the zero-dimensional Lindel\"of 
space~$I_\alpha$ is strongly zero-dimensional.

Therefore the union $Z=\bigcup_{\alpha\in\omega_2}\cl I_\alpha$ is an open
cover of~$\beta X\setminus X_{\omega_2}$ by compact zero-dimensional
sets and hence zero-dimensional.

It follows that $Z$~is a locally compact zero-dimensional $F$-space that is not
strongly zero-dimensional. 

\subsection*{Questions}

Our examples have weight $\aleph_2^{\aleph_0}$, so under $\CH$ the 
$\ZFC$~example cannot be embedded into~$\Nstar$.
We do not know whether it can be embedded if $\CH$ fails.
In fact we do not know the answer to the following question,
which has been asked before but bears repeating often.

\begin{question}
Is there a subspace of $\Nstar$ that is not strongly zero-dimensional?  
\end{question}

\subsubsection*{Weight $\aleph_1$}

It is well-known, and easy to see, that every space of cardinality less 
than~$\cee$ is strongly zero-dimensional.

\smallskip
A similar phenomenon can be observed among $F$-spaces.

If $X$~is an $F$-space and $f:X\to\R$ is continuous then for every $r\in\R$
the closures of $\{x:f(x)<r\}$ and $\{x:f(x)>r\}$ are disjoint and the
complement of the union of these closures is an open set, $O_r$ say.
It follows that if the cellularity of~$X$ is less than~$\cee$ then
there will be many~$r$ such that $O_r=\emptyset$.
For such~$r$ the closures of $\{x:f(x)<r\}$ and $\{x:f(x)>r\}$ would
be complementary clopen sets.
We find that $F$-spaces of cellularity less than~$\cee$ are automatically
strongly zero-dimensional. 
In case its cellularity is countable an $F$-space is even extremally 
disconnected, which means that disjoint open sets have disjoint closures.

\smallskip
What our space leaves unanswered is what happens for $F$-spaces of 
weight~$\aleph_1$.
Of course if $\aleph_1<\cee$ then the comments above show that there
is nothing more to investigate.
Therefore we should assume the Continuum Hypothesis in order to obtain
non-trivial questions and results. 

It has been a rule-of-thumb under the assumption of~$\CH$ that $F$-spaces 
of weight~$\aleph_1$ show many parallels with separable metrizable spaces.
In~\cite{MR2847324} one finds versions for compact $F$-spaces of 
weight~$\aleph_1$ of some well-known theorems for compact metrizable spaces.
In particular that the three main dimension functions coincide on this class.

We ask whether this holds without the compactness condition,
assuming~$\CH$ of course.

\begin{question}
Is every zero-dimensional $F$-space of weight~$\aleph_1$ strongly
zero-di\-men\-sio\-nal?  
\end{question}

And more generally.

\begin{question}
Does the equality $\dim X=\operatorname{ind}X=\operatorname{Ind}X$
hold for every $F$-space of weight~$\aleph_1$?  
\end{question}

\begin{bibdiv}
\begin{biblist}

\bib{flow93719}{webpage}{    
    title={Is Stone-\v{C}ech compactification of 0-dimensional 
            space also 0-dimensional?},    
    author={Dashiell, Fred},
    note={(version: 2018-10-17)},    
    url={https://mathoverflow.net/questions/93719},    
    organization={MathOverFlow},
}

\bib{MR86286}{article}{
   author={Dowker, C. H.},
   title={Local dimension of normal spaces},
   journal={Quart. J. Math. Oxford Ser. (2)},
   volume={6},
   date={1955},
   pages={101--120},
   issn={0033-5606},
   review={\MR{86286}},
   doi={10.1093/qmath/6.1.101},
}
    
\bib{MR1039321}{book}{
   author={Engelking, Ryszard},
   title={General topology},
   series={Sigma Series in Pure Mathematics},
   volume={6},
   edition={2},
   note={Translated from the Polish by the author},
   publisher={Heldermann Verlag, Berlin},
   date={1989},
   pages={viii+529},
   isbn={3-88538-006-4},
   review={\MR{1039321}},
}

\bib{MR0407579}{book}{
   author={Gillman, Leonard},
   author={Jerison, Meyer},
   title={Rings of continuous functions},
   series={Graduate Texts in Mathematics, No. 43},
   note={Reprint of the 1960 edition},
   publisher={Springer-Verlag, New York-Heidelberg},
   date={1976},
   pages={xiii+300},
   review={\MR{0407579}},
}

\bib{MR1229130}{article}{
   author={Hart, Klaas Pieter},
   title={The \v{C}ech-Stone compactification of the real line},
   conference={
      title={Recent progress in general topology},
      address={Prague},
      date={1991},
   },
   book={
      publisher={North-Holland, Amsterdam},
   },
   date={1992},
   pages={317--352},
   review={\MR{1229130}},
   doi={10.1016/0887-2333(92)90021-I},
}

\bib{MR2847324}{article}{
   author={Hart, Klaas Pieter},
   author={van Mill, Jan},
   title={Covering dimension and finite-to-one maps},
   journal={Topology Appl.},
   volume={158},
   date={2011},
   number={18},
   pages={2512--2519},
   issn={0166-8641},
   review={\MR{2847324}},
   doi={10.1016/j.topol.2011.08.017},
}

\bib{zbMATH02646353}{article}{
 author = {Hausdorff, F.},
 title = {Untersuchungen \"uber Ordnungstypen},
 journal={Leipz. Ber.},
 volume={58}, 
 date = {1906},
 pages={106--169},
 review={\zbl{37.0070.03}},
}

\bib{zbMATH02615204}{book}{
 author = {Hausdorff, Felix},
 title = {Grundz\"uge der Mengenlehre},
 note={ Mit 53 Figuren im Text},
 date={1914},
 Language = {German},
 publisher={Leipzig: Verlag von Veit \& Comp.},
 pages={VIII u. 476 S.},
 review={\zbl{45.0123.01}},
}
	
\bib{MR597342}{book}{
   author={Kunen, Kenneth},
   title={Set theory. An introduction to independence proofs},
   series={Studies in Logic and the Foundations of Mathematics},
   volume={102},
   publisher={North-Holland Publishing Co., Amsterdam-New York},
   date={1980},
   pages={xvi+313},
   isbn={0-444-85401-0},
   review={\MR{597342}},
}

\bib{MR0221480}{article}{
   author={Lifanov, I. K.},
   title={The dimension of a product of ordered continua},
   language={Russian},
   journal={Dokl. Akad. Nauk SSSR},
   volume={177},
   date={1967},
   pages={778--781},
   issn={0002-3264},
   url={http://mi.mathnet.ru/dan33502},
   review={\MR{0221480}},
}

\bib{flow239324}{webpage}{    
    title={Zero-dimensional F-space which is not strongly zero-dimensional},    
    author={McGovern, W.},
    note={(version: 2016-05-20)},    
    url={https://mathoverflow.net/questions/239324},    
    organization={MathOverFlow},
}
		
\bib{MR227960}{article}{
   author={Roy, Prabir},
   title={Nonequality of dimensions for metric spaces},
   journal={Trans. Amer. Math. Soc.},
   volume={134},
   date={1968},
   pages={117--132},
   issn={0002-9947},
   review={\MR{227960}},
   doi={10.2307/1994832},
}
		
\end{biblist}
\end{bibdiv}

\end{document}